\pgfplotsset{compat=1.12}
\renewcommand{\Re}{\operatorname{Re}}
\newtheorem{theorem}{Theorem}[section]
\newtheorem{lemma}[theorem]{Lemma}
\newtheorem{proposition}[theorem]{Proposition}
\theoremstyle{definition}
 \theoremstyle{remark}
\title{A two variable zeta function associated to the space of binary forms of degree $d$}
\author{Eun Hye Lee}
\address{Department of Mathematics, Texas Christian University, TCU Box 298900, Fort Worth, TX 76129}
\email{eun.hye.lee@tcu.edu}
\author{Ramin Takloo-Bighash}
\address{Dept. of Math, Stat, and Comp. Sci, University of Illinois at Chicago, 851 S. Morgan St, Chicago, IL 60607}
\email{rtakloo@uic.edu}
\date{September 2023}
\begin{document}

\maketitle

\begin{abstract}
    In this paper we prove the analytic continuation of a two variable zeta function defined using the vector space of binary forms of degree $d$ to the entire two dimensional complex space as a meromorphic function. 
\end{abstract}

\section{Introduction}

Let $X_d$ be the vector space of all binary $d$ forms of degree $d$, i.e., the vector space of all polynomials of the form 
\begin{equation}\label{eq:general}
F(X, Y) = \sum_{r=0}^d a_r X^r Y^{d-r}
\end{equation}
with real coefficients. Let $X_d^+$ be the collection of forms with integral coefficients such that $a_d  >0$. Also, let $\Gamma_\infty=\left<\begin{pmatrix} 1&1\\0&1\end{pmatrix}\right>$ be the upper triangular unipotent elements of $\operatorname{SL}_2(\mathbb{Z})$. Then we have a left action of $\Gamma_\infty$ on $X_d^+$, given by $\gamma\circ F(X,Y)=F((\gamma\circ (X,X)^T)^T)$.
We note that under the action of $\Gamma_\infty$ every form is equivalent to form $F$ in Equation \eqref{eq:general} such that $0 \leq a_{d-1} \leq da_d -1$. 

\begin{lemma}
		Let $\displaystyle F(X,Y)=\sum_{r=0}^d a_r X^rY^{d-r}$. Then, $\displaystyle I_2= (d-1) a_{d-1}^2-2da_da_{d-2}$ is invariant under $\displaystyle \Gamma_\infty=\left<\begin{pmatrix}1&1\\0&1\end{pmatrix}\right>$.
	\end{lemma}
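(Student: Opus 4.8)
The plan is to use that $\Gamma_\infty$ is infinite cyclic, generated by the single matrix $\gamma=\begin{pmatrix}1&1\\0&1\end{pmatrix}$. Since the identity $I_2(\gamma\circ F)=I_2(F)$ is to hold for all $F$, replacing $F$ by $\gamma^{-1}\circ F$ shows invariance under $\gamma^{-1}$ as well, hence under every element of $\Gamma_\infty$; so the entire lemma reduces to checking that single identity. First I would write the action out explicitly: since $\gamma\begin{pmatrix}X\\Y\end{pmatrix}=\begin{pmatrix}X+Y\\Y\end{pmatrix}$, we have $\gamma\circ F(X,Y)=F(X+Y,Y)$. Expanding by the binomial theorem gives $F(X+Y,Y)=\sum_{s=0}^d b_s X^sY^{d-s}$ with $b_s=\sum_{r=s}^d\binom{r}{s}a_r$.

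The key simplification is that $I_2$ involves only the three top coefficients, so I only need $b_d$, $b_{d-1}$, and $b_{d-2}$, namely
\[
b_d=a_d,\qquad b_{d-1}=a_{d-1}+d\,a_d,\qquad b_{d-2}=a_{d-2}+(d-1)a_{d-1}+\binom{d}{2}a_d,
\]
using $\binom{d}{d-1}=d$, $\binom{d-1}{d-2}=d-1$, and $\binom{d}{d-2}=\binom{d}{2}$. Substituting into $I_2(\gamma\circ F)=(d-1)b_{d-1}^2-2d\,b_db_{d-2}$ and expanding, the terms beyond the leading ones should cancel in pairs: the $(d-1)b_{d-1}^2$ piece contributes $2d(d-1)a_da_{d-1}+d^2(d-1)a_d^2$ on top of $(d-1)a_{d-1}^2$, while $-2d\,b_db_{d-2}$ contributes exactly $-2d(d-1)a_da_{d-1}-d^2(d-1)a_d^2$ on top of $-2d\,a_da_{d-2}$. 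What survives is precisely $(d-1)a_{d-1}^2-2d\,a_da_{d-2}=I_2(F)$.

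I do not anticipate a real obstacle here: the statement is a finite polynomial identity, and the only care required is bookkeeping the binomial coefficients and confirming the two cancellations. Conceptually, $I_2$ is a \emph{seminvariant} of the binary form in the classical sense, i.e.\ a polynomial in the coefficients annihilated by the operator $D=\sum_s (s+1)a_{s+1}\,\partial/\partial a_s$ that generates the unipotent substitution $X\mapsto X+tY$; one can double-check the result infinitesimally by verifying $D I_2=2d(d-1)a_{d-1}a_d-2d(d-1)a_da_{d-1}=0$, which explains why such an invariant exists and serves as a useful sanity check on the finite computation above.
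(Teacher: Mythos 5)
Your proof is correct and takes essentially the same approach as the paper: both compute the transformed top three coefficients ($b_d=a_d$, $b_{d-1}=a_{d-1}+d\,a_d$, $b_{d-2}=a_{d-2}+(d-1)a_{d-1}+\binom{d}{2}a_d$) and verify the resulting quadratic identity, and your bookkeeping and cancellations check out. The only cosmetic difference is that the paper works with a general translation parameter $s$ (via a Taylor expansion in $t$), which covers every element $X\mapsto X+kY$ of $\Gamma_\infty$ at once, whereas you verify only the generator and then invoke the (valid) subgroup/stabilizer argument; your closing seminvariant remark with $D I_2=0$ is also correct.
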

	\begin{proof}
		Since $\displaystyle F(X+tY,Y)=\sum_{r=0}^d\  a_r\ (X+tY)^r\ Y^{d-r}$, we have 
		\begin{align*}
			\frac{\partial F}{\partial t}(X+tY,Y)=&\sum_{r=0}^d\ a_r\ r\  (X+tY)^{r-1}\ Y^{d-r+1}\\
			\frac12\frac{\partial^2F}{\partial t^2}(X+tY,Y)=&\sum_{r=0}^d\ a_r\ \frac{r(r-1)}2\ (X+tY)^{r-2}\ Y^{d-r+2}.
		\end{align*}
		Hence, 
		\begin{align*}
			&F(X+tY,Y)|_{t=0}+s\ \frac{\partial F}{\partial t}(X+tY,Y)|_{t=0}+\frac{s^2}2\ \frac{\partial^2 F}{\partial t^2}(X+tY,Y)|_{t=0}\\&=\sum_{r=0}^d\ a_r\ X^r\ Y^{d-r}+s\ \sum_{r=0}^d\ a_r\ r\ X^{r-1}\ Y^{d-r+1}+s^2\ \sum_{r=0}^d\ a_r\ \frac{r(r-1)}2\  X^{r-2}\ Y^{d-r+2}\\&=a_d\ X^n+(a_{d-1}+s\ a_d\ d)X^{d-1}\ Y\\&\phantom{=}+\sum_{r=0}^{d-2}\left(a_{d-r}+s\ a_{d-r+1}(d-r+1)+s^2\ a_{d-r+2}\ \frac{(d-r+2)(d-r+1)}2\right)X^{d-r}\ Y^r.
		\end{align*}
		Therefore, 
		$$
			(a_{d-1}+s\ a_d\ d)^2-\frac{2d}{d-1}\ a_d \left(a_{d-1}+s\ a_{d-1}(d-1)
   +s^2\ a_d\ \frac{d(d-1)}2\right)=a_{d-1}^2-\frac{2d}{d-1}\ a_d\ a_{d-2}.
		$$
	\end{proof}

In analogy with the zeta function considered in \cite{LTB} we set, at first formally, for $(s_1, s_2) \in \mathbb C^2$, 
$$
Z_d(s_1, s_2) = \sum_{\substack{a, b , c \in \mathbb Z \\ a >0, 0 \leq b < da \\ 2d a c - (d-1)b^2 >0 \\ 
2d a c - (d-1)b^2 \text{ square-free, odd}}} \frac{1}{a^{s_1}(2d a c - (d-1)b^2)^{s_2}}.
$$

\begin{theorem}\label{thm:main}
    The function $Z_d(s_1, s_2)$ converges absolutely in $\Re s_1, \Re s_2 \gg 0$, and has an analytic continuation to a meromorphic function on all of $\mathbb C^2$.
\end{theorem}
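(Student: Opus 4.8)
The plan is to regard $Z_d(s_1,s_2)$ as a double Dirichlet series and to exploit the exact regrouping obtained by summing first over the pairs $(b,c)$ giving a fixed value of $N:=2dac-(d-1)b^2$. For fixed $a$ and $0\le b<da$ there is at most one $c$ with $2dac=N+(d-1)b^2$, and it exists precisely when $2da\mid N+(d-1)b^2$; hence, setting $\rho_a(N)=\#\{\,0\le b<da:\ (d-1)b^2+N\equiv 0\pmod{2da}\,\}$, one has
\begin{equation*}
Z_d(s_1,s_2)=\sum_{\substack{N\ge 1\\ N\ \mathrm{odd,\ sqfree}}}\frac{1}{N^{s_2}}\,G_N(s_1),\qquad G_N(s_1)=\sum_{a\ge 1}\frac{\rho_a(N)}{a^{s_1}} .
\end{equation*}
Two structural simplifications guide the analysis. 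First, $\rho_a(N)$ depends on $N$ only through its residue modulo $2da$. Second, since the modulus $2da$ is even, a congruence modulo $2da$ already fixes the parity of $N$, so the oddness condition is automatic once $b$ is restricted to residues with $-(d-1)b^2$ odd and requires no separate analytic treatment. The square-free condition I would dispose of by $\mu^2(N)=\sum_{e^2\mid N}\mu(e)$, with $e$ necessarily odd.

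For the $s_2$-direction I would fix $a$ and split $\sum_N\rho_a(N)N^{-s_2}$ (over odd square-free $N$) into residue classes modulo $2da$. Each class is a square-free-in-arithmetic-progression Dirichlet series; inserting the Möbius identity above reduces it, for each $e$, to a Hurwitz zeta $\zeta(s_2,x)$ with a shift $x$ read off from the congruence data modulo $\operatorname{lcm}(2da,e^2)$. This gives meromorphic continuation in $s_2$ to all of $\mathbb C$, the only pole of each Hurwitz factor being the simple pole at $s_2=1$. Re-summing the Möbius variable $e$ produces, at leading order, a factor of $1/\zeta(2s_2)$ type (and, after the twists below, $1/L(2s_2,\chi)$ type). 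The key point for the theorem is that $1/\zeta(2s_2)$ is meromorphic on all of $\mathbb C$, its poles sitting at $s_2=\rho/2$ for zeros $\rho$ and marching off toward $i\infty$, so no natural boundary intervenes and the continuation is genuinely meromorphic rather than merely to a half-plane.

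For the $s_1$-direction I would analyze $G_N(s_1)=\sum_a\rho_a(N)a^{-s_1}$ through the multiplicativity of the solution count $\rho_a(N)$ in $a$. By the Chinese Remainder Theorem $\rho_a(N)$ factors over the primes dividing $2da$, with local factors governed by counts of solutions of the quadratic congruence $(d-1)b^2\equiv -N$, i.e.\ by Gauss sums and quadratic reciprocity; the only genuinely delicate local issue is that $b$ ranges over $[0,da)$, half of the residues modulo $2da$, which must be handled prime-by-prime. I expect $G_N(s_1)$ to emerge as a ratio of $\zeta$- and Dirichlet $L$-values in $s_1$ whose data depend on $N$ only through congruence and character information, hence to continue to all of $s_1\in\mathbb C$. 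Feeding this back into the regrouping identity and combining with the $s_2$-continuation yields the joint meromorphic continuation, with anticipated polar divisors along $s_1=1$ and $s_2=1$ (the residue along $s_2=1$ being, to leading order, a constant multiple of $\zeta(s_1)$, since the density of admissible $N$ is essentially independent of $a$), the poles inherited from the $1/\zeta(2s_2)$-type denominators, and possibly further divisors produced by the coupling of the two variables.

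The hard part will be the passage from convergence in the tube $\{\Re s_1,\Re s_2\gg 0\}$ to honest meromorphy on all of $\mathbb C^2$. Because the modulus $2da$ grows with $a$, justifying the interchange of summations and the locally uniform convergence of the continued pieces away from the polar set requires estimates that are \emph{uniform in the conductor} for the Hurwitz-zeta shifts and the attendant Gauss sums and Dirichlet $L$-functions. A secondary subtlety is the bookkeeping of the square-free condition: one must verify that the poles contributed by the $1/L(2s_2,\chi)$ factors remain isolated and do not conspire with the infinite $a$-sum to accumulate, so that $Z_d$ is genuinely meromorphic on $\mathbb C^2$. I expect this uniform control, rather than any single residue computation, to be the main obstacle.
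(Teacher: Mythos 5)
Your regrouping and your $s_1$-direction analysis reproduce, in substance, the first half of the paper's argument: the paper writes the (generalized) series as $\sum_{m,n} C_{A,B}(m,n)\, m^{-s_1} n^{-s_2}$ with $C_{A,B}$ a quadratic congruence count, evaluates the $m$-sum by multiplicativity and local computations, and arrives at
\begin{equation*}
\zeta_{A,B}(s_1,s_2) \;=\; \frac{\zeta(s_1)}{\zeta(2s_1)} \sum_{n \text{ odd, square-free}} a_{n,A,B}(s_1)\, \frac{L_{2AB}\!\left(\left(\tfrac{-nB}{\cdot}\right), s_1\right)}{n^{s_2}},
\end{equation*}
with correction factors $a_{n,A,B}(s_1)$ depending only on $n \bmod 8A$ — this is exactly your prediction that $G_N(s_1)$ is a ratio of $\zeta$- and $L$-values governed by congruence data. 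But everything up to this point is the easy half. The actual content of the theorem is the joint continuation of the remaining series over square-free $n$ to all of $\mathbb C^2$, and the paper does not obtain it from growth estimates: after untwisting the $n$-dependence of $a_{n,A,B}$ by orthogonality of Dirichlet characters mod $8A$ (plus an inclusion–exclusion at primes dividing $A$), it invokes the main theorem of \S 2.2--2.3 of \cite{LTB} for $\sum_{n \text{ sqfree}} \chi(n) L\left(\left(\tfrac{-nB}{\cdot}\right),s_1\right) n^{-s_2}$. That result rests on the functional equation of the quadratic $L$-functions: because $n$ is square-free, $\left(\tfrac{-nB}{\cdot}\right)$ has conductor comparable to $n$, and the root-number factor $n^{1/2-s_1}$ in the functional equation shifts the $s_2$-variable, producing the extra functional equations that push the continuation past the convexity barrier. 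Your proposal contains no substitute for this mechanism.

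Concretely, both devices you propose fall short of $\mathbb C^2$. First, conductor-uniform estimates in the $s_1$-direction: after continuation one has $|L(s_1,\chi_{-N})| \ll N^{1/2-\Re s_1+\epsilon}$ for $\Re s_1<0$, so re-inserting the continued $G_N(s_1)$ into the $N$-sum converges only in a region of the shape $\Re s_1 + \Re s_2 > 3/2$; this is already a convex tube, so no Hartogs-type argument extends it, and no uniform bound whatsoever can make the sum converge when both real parts are very negative. Second, the $s_2$-direction is worse than you think: for fixed $a$, decomposing into classes mod $2da$ and sieving for square-freeness produces not ``$1/\zeta(2s_2)$-type'' factors but $L(s_2,\chi)/L(2s_2,\chi^2)$ for characters $\chi$ of modulus $2da$. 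Each $L(2s_2,\chi^2)$ of conductor $q$ has $\gg_T \log q$ zeros with $|\Im s_2|\le T$, all lying in $0<\Re s_2<1/2$; as $a$ ranges over all integers these poles are not locally finite in that strip, so the termwise-continued sum $\sum_a a^{-s_1} F_a(s_2)$ cannot be meromorphic there. Your parenthetical claim that the poles ``march off toward $i\infty$, so no natural boundary intervenes'' is exactly what fails once the modulus grows with $a$: this rearrangement has a genuine natural-boundary obstruction along the strip, which moreover disconnects the half-plane of absolute convergence from any region to its left that your estimates might reach. So as it stands the proposal proves continuation only to a proper tube domain; the missing idea is precisely the double-Dirichlet-series continuation (via functional equations, as in \cite{LTB} or the Goldfeld–Hoffstein theory) that the paper imports as its key external input.
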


In fact we will prove a more general result, see Theorem \ref{thm:main} below. The proof we present in the following section is an adaptation of the proof of the main theorem of \cite{LTB}. 

\

 The zeta function in Theorem \ref{thm:main} lacks the typical symmetries that zeta functions with analytic continuation enjoy. In particular we do not know if it satisfies a functional equation. In general, whenever a zeta function has an analytic continuation it is for a very good reason. At present we do not have a conceptual explanation for why the zeta function considered here continues to all of $\mathbb C^2$. It would be desirable to find such an explanation. 

 The second author is partially supported by a grant from the Simons Foundation. We thank Evan O'Dorney for suggesting that we extend our results from \cite{LTB} to general $d$, and Takashi Taniguchi for catching an error in an earlier version of this paper.  

The paper is organized as follows. After this introduction, we define a general zeta function in \S \ref{sect:zeta} and prove its analytic properties. The main theorem of the paper is included at the end \S \ref{sect:zeta} as Theorem \ref{thm:main}. 

\section{A two variable zeta function}\label{sect:zeta}

Let $A, B$ be a pair of coprime integers. For $(s_1, s_2) \in \mathbb C^2$, set, at first formally, 
\begin{equation}\label{zetaAB}
Z_{A, B}(s_1, s_2) = \sum_{\substack{ a, b, c \in \mathbb Z 
\\ a>0, 0 \leq b < Aa \\  A a c - B b^2 > 0, \text{ odd, square-free}}} \frac{1}{a^{s_1} (A ac - B b^2)^{s_2}}. 
\end{equation}

It is easy to see that the above sum is formally equal to $$
Z_{A, B}(s_1, s_2) = \sum_{m, n =1 \atop n \text{ odd, square-free}}^\infty \frac{C_{A, B}(m, n)}{m^{s_1}n^{s_2}}
$$
with 
$$
C_{A, B}(m, n) = \# \{ x \mod m A \mid B x^2 \equiv - n \mod m A\}.
$$
Since $C_{A, B}(m, n)$ is at most $mA$, the absolute convergence of $Z(s_1, s_2)$ for $\Re s_1, \Re s_2$ large is immediate. Until otherwise noted, we will proceed within the domain of absolute convergence. 

\

Next, we write 
$$
Z_{A, B}(s_1, s_2) = \sum_{d \mid B} \sum_{\substack{m, n =1 \\ \gcd(B, m)=\delta \\ 
n \text{ odd, square-free}}}^\infty\frac{C_{A, B}(m, n)}{m^{s_1}n^{s_2}}.
$$
In the inner sum, unless $\delta \mid n$, $C_{A, B}(m, n) =0$. Next, when $\delta \mid m, \delta \mid n$ we have 
\begin{align*}
C_{A, B}(m, n) & = \# \{ x \mod m A \mid B x^2 \equiv - n \mod m A\} \\ 
&= \delta \# \{ x \mod \frac{m}{\delta} A \mid \frac{B}{\delta} x^2 \equiv - \frac{n}{\delta} \mod \frac{m}{\delta} A\} \\ 
& = \delta C_{A, \frac{B}{\delta}}(\frac{m}{\delta}, \frac{n}{\delta}). 
\end{align*}
After a change of variables we write 
$$
Z_{A, B}(s_1, s_2)= \sum_{\delta \mid B} \frac{1}{\delta^{s_1 + s_2 -1}} \sum_{\substack{m, n =1  \\ \gcd(m, \frac{B}{\delta})=1 \\ n \text{ odd, square-free}}}^\infty \frac{C_{A, \frac{B}{\delta}} (m, n)}{m^{s_1} n^{s_2}}.
$$

Since we are interested in analytic continuation, it suffices to prove the analytic continuation of each of the inner summands. For that reason replacing $B/\delta$ with $B$, for a pair of coprime integers $A, B$ we set 
$$
\zeta_{A, B}(s_1, s_2) = \sum_{\substack{m, n =1  \\ \gcd(m, B)=1 \\ n \text{ odd, square-free}}}^\infty \frac{C_{A, B} (m, n)}{m^{s_1} n^{s_2}}
$$

Since we need to keep track of moduli, given a modulus $k$, whenever $\gcd(x, k)=1$, we denote the multiplicative inverse of $x$ modulo $k$ by $f_k(x)$.   We note that $f_{k}(x)$ can always be chosen to be represented by an odd number and we will do this. Indeed, if $2\mid k$, then $x$ is odd as have assumed $\gcd(x, k)=1$, and that means that since $f_{k}(x)$ will be odd for any choice of the representative. On the other hand, if $k$ is odd, and $f_{k}(x)$ is represented by an even number, then we will replace $f_k(x)$ by $f_k(x) + k$ to obtain an odd number. 

With this convention, since $\gcd(B, mA)=1$, we have 
\begin{align*}
C_{A, B} (m, n) & = \# \{ x \mod m A \mid x^2 \equiv - f_{mA}(B) n \mod m A\} \\ 
&= C(mA, - f_{mA}(B) n), 
\end{align*}
where as in \S 2.1 of \cite{LTB} for integers $m, n $
$$
C(m, n) = \# \{ x \mod m  \mid x^2 \equiv n \mod m\}. 
$$
Recall the following proposition from \cite{LTB}:

\begin{proposition} The following properties hold. \begin{enumerate}[i)]\item For any fixed n, $C(m,n)$ is a multiplicative function in $m$. In particular, $C(1,n)=1$ for all $n$.
\item If any prime $p\ne 2$ and $p\nmid n$, then $C(p^{\alpha},n)=1+\left(\dfrac{n}{p}\right)$ for $\alpha>0$.
\item If $p=2$ and $n$ is odd, then for $\alpha>0$, $$C(2^{\alpha},n)=\begin{cases} 1 & \alpha=1,\\ 2 & \alpha=2, n\equiv 1 \mod 4, \\ 4 & \alpha\ge 3, n\equiv 1\mod 8, \\0 & \mbox{otherwise.}\end{cases}$$
\item If $n=p^r n_0$ with $p\nmid n_0$, then for $\alpha>0$, $$C(p^{\alpha},p^rn_0)=\begin{cases} p^{\left\lfloor\frac{\alpha}{2}\right\rfloor} & r\ge\alpha,  \\ p^{\frac{r}2}C(p^{\alpha-r},n_0) & r<\alpha, r \mbox{ even},\\ 0 & \mbox{otherwise.}\end{cases}$$ \end{enumerate}
\end{proposition}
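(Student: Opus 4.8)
The plan is to prove all four properties by reducing the count $C(m,n)=\#\{x \bmod m \mid x^2 \equiv n \bmod m\}$ to prime powers and then analyzing each prime separately, treating the odd primes, the prime $2$, and the ramified case (when $p \mid n$) in turn. First, for (i), I would invoke the Chinese Remainder Theorem: if $m=m_1 m_2$ with $\gcd(m_1,m_2)=1$, then $x^2 \equiv n \pmod{m}$ holds if and only if $x^2 \equiv n \pmod{m_1}$ and $x^2 \equiv n \pmod{m_2}$ hold simultaneously, and CRT sets up a bijection between residues $x \bmod m$ and pairs $(x \bmod m_1, x \bmod m_2)$. This yields $C(m_1 m_2, n)=C(m_1,n)\,C(m_2,n)$, while $C(1,n)=1$ is immediate, so the problem is localized at prime powers $p^\alpha$.

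For (ii), with $p$ odd and $p \nmid n$, I would first count solutions modulo $p$: the squaring map is two-to-one from $(\mathbb{Z}/p)^\times$ onto the squares, so the number of roots of $x^2 \equiv n \pmod p$ is $1+\left(\dfrac{n}{p}\right)$. I would then lift by Hensel's lemma: at any root $x_0$ we have $p \nmid x_0$ (since $x_0^2 \equiv n \not\equiv 0$) and $p \neq 2$, so the derivative $2x_0$ of $x^2-n$ is a unit mod $p$; hence each simple root lifts uniquely from $\bmod\, p$ to $\bmod\, p^\alpha$ and the count is preserved.

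For (iii) the prime $2$ admits no such Hensel lift, because $2x$ is never a unit, so I would argue directly from the structure of the squares in $(\mathbb{Z}/2^\alpha)^\times$. For $\alpha=1$ every odd residue is $\equiv 1 \pmod 2$, giving exactly one root; for $\alpha=2$ the odd squares are precisely those $\equiv 1 \pmod 4$, whose roots are $\{1,3\}$; and for $\alpha \geq 3$ I would use the isomorphism $(\mathbb{Z}/2^\alpha)^\times \cong \mathbb{Z}/2 \times \mathbb{Z}/2^{\alpha-2}$, under which the subgroup of squares has index $4$ and equals the set of residues $\equiv 1 \pmod 8$, each with exactly $4$ square roots.

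Finally, for (iv), I would run a $p$-adic valuation analysis with $n=p^r n_0$ and $p \nmid n_0$. If $r<\alpha$, any solution of $x^2 \equiv p^r n_0 \pmod{p^\alpha}$ must satisfy $v_p(x^2)=r$, since otherwise $v_p(x^2-p^r n_0)$ would be at most $r<\alpha$; this forces $r$ even and $v_p(x)=r/2$, so writing $x=p^{r/2}y$ reduces the congruence to $y^2 \equiv n_0 \pmod{p^{\alpha-r}}$, and counting how many residues $x \bmod p^\alpha$ lie above each root $y \bmod p^{\alpha-r}$ produces the factor $p^{r/2}$, giving $p^{r/2}C(p^{\alpha-r},n_0)$; an odd $r$ leaves no solution. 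If $r \geq \alpha$, the congruence is $x^2 \equiv 0 \pmod{p^\alpha}$, i.e. $v_p(x) \geq \lceil \alpha/2 \rceil$, which is satisfied by $p^{\lfloor \alpha/2 \rfloor}$ residues. I expect the main obstacle to be precisely this step (iv), namely keeping the lifting multiplicities consistent when passing among the moduli $p^\alpha$, $p^{\alpha-r/2}$, and $p^{\alpha-r}$, together with the hands-on $2$-adic bookkeeping in (iii), since neither case is covered by the clean Hensel argument available for odd primes.
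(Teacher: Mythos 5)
Your proof outline is correct in all four parts, but note a structural point: the paper does not actually prove this proposition at all --- its ``proof'' consists of the single line ``See Proposition 2.2 of \cite{LTB},'' deferring entirely to the earlier paper. So your self-contained argument is supplying what the paper omits, and it is the standard argument one would expect to find in that reference: the Chinese Remainder Theorem gives (i); Hensel's lemma gives (ii), since at any root $x_0$ of $x^2 \equiv n \bmod p$ with $p$ odd and $p \nmid n$ the derivative $2x_0$ is a unit, so roots lift uniquely and the count $1 + \left(\frac{n}{p}\right)$ is preserved at every level $p^\alpha$; the structure of $(\mathbb{Z}/2^\alpha\mathbb{Z})^\times \cong \mathbb{Z}/2 \times \mathbb{Z}/2^{\alpha-2}$ gives (iii), where the squares form an index-$4$ subgroup consisting exactly of the classes $\equiv 1 \bmod 8$ when $\alpha \geq 3$; and your valuation analysis in (iv) is exactly right: for $r < \alpha$ any solution forces $v_p(x^2) = r$ on the nose (since $x^2 - p^r n_0 \equiv 0 \bmod p^\alpha$ and $p \nmid n_0$), hence $r$ must be even with $v_p(x) = r/2$, and writing $x = p^{r/2} y$ with $y$ ranging over residues mod $p^{\alpha - r/2}$, each root $y \bmod p^{\alpha - r}$ of $y^2 \equiv n_0$ has exactly $p^{(\alpha - r/2)-(\alpha - r)} = p^{r/2}$ lifts, giving $p^{r/2} C(p^{\alpha - r}, n_0)$; while $r \geq \alpha$ reduces to $v_p(x) \geq \lceil \alpha/2 \rceil$, which is satisfied by exactly $p^{\alpha - \lceil \alpha/2 \rceil} = p^{\lfloor \alpha/2 \rfloor}$ classes. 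What your approach buys is that the present paper becomes logically self-contained on this point; what the paper's citation buys is brevity, since these counts are elementary and well documented. No gaps.
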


\begin{proof} See Proposition 2.2 of \cite{LTB}. 
\end{proof} 

We apply the proposition to compute the value of $C(mA, - f_{mA}(B)n)$. We write $mA= p_1^{\alpha_1} \cdots p_r^{\alpha_r}$.  By multiplicativity, we have 
$$
C(mA, - f_{mA}(B)n) = \prod_i C(p_i^{\alpha_i}, - f_{mA}(B)n).
$$
If $p_i \nmid n$ and $p \ne 2$, then 
$$
C(p_i^{\alpha_i}, - f_{mA}(B)n) = 1 + \left(\frac{- f_{mA}(B)n)}{p_i} \right) = 1 +  \left(\frac{f_{mA}(B)}{p_i} \right) \left(\frac{- n}{p_i} \right). 
$$
Since $B f_{mA}(B) \equiv 1 \mod mA$ and $p_i \mid mA$, this means $B f_{mA}(B) \equiv 1 \mod p_i$. Consequently, $(f_{mA}(B)/p_i) = (B/p_i)$. We have 
$$
C(p_i^{\alpha_i}, - f_{mA}(B)n) = 1 + \left(\frac{-Bn}{p_i} \right).
$$
If $p_i \| n$ and $p_i \ne 2$, then 
$$
C(p_i^{\alpha_i}, - f_{mA}(B)n) =  \begin{cases} 
1 & \alpha_i=1 \\
0 & \alpha_i > 1.
\end{cases}
$$
Since $n$ is assumed to be square free we do not need to consider the cases where $p_i^2 \mid n$.

If $p_i=2$, let $2^\alpha \| mA$. Then since $2^\alpha \mid mA$, $B f_{mA}(B) \equiv 1 \mod 2^\alpha$. This implies 
$$C(2^{\alpha},-n f_{mA}(B))=\begin{cases} 1 & \alpha=1,\\ 2 & \alpha=2, n\equiv -B \mod 4, \\ 4 & \alpha\ge 3, n\equiv -B\mod 8, \\0 & \mbox{otherwise.}\end{cases}$$

We note the following important consequence of the above computations: 

$$
C(mA, - f_{mA}(B)n) = \prod_i C(p_i^{\alpha_i}, - f_{p^i}(B)n).
$$

Next, we write 

\begin{align*}
\zeta_{A, B}(s_1, s_2) & = \sum_{\substack{m, n =1  \\ \gcd(m, B)=1 \\ n \text{ odd, square-free}}}^\infty \frac{C_{A, B} (m, n)}{m^{s_1} n^{s_2}} \\
&= \sum_{\substack{n=1 \\ n \text{ odd, square-free}}}^\infty
\frac{1}{n^{s_2}} \sum_{\substack{m=1  \\ \gcd(m, B)=1 }}^\infty \frac{C (mA, -n f_{mA}(B))}{m^{s_1}} \\
& = \sum_{\substack{n=1 \\ n \text{ odd, square-free}}}^\infty
\frac{Z_{n, A, B}(s_1)}{n^{s_2}}
\end{align*}
where for any $s \in \mathbb C$ we have set 
$$
Z_{n, A, B}(s) = \sum_{\substack{m=1  \\ \gcd(m, B)=1 }}^\infty \frac{C (mA, -n f_{mA}(B))}{m^{s}}. 
$$
As in Proposition 2.3 of \cite{LTB} the zeta function $Z_{n, A, B}$ has an Euler product expansion of the form 
$$
Z_{n, A, B} (s) = \prod_{p \nmid B} Z_{n, A, B, p}(s)
$$
with 
$$
Z_{n, A, B, p}(s) = \sum_{k =0}^\infty   \frac{C (p^{k+ \text{ord}_p(A)}, -n f_{p^{k+ \text{ord}_p(A)}}(B))}{p^{ks}}.
$$

We now compute the local zeta functions for various primes $p$.  We will be adapting the computations of \S 2.1.1 of \cite{LTB}. 

\

If $p \nmid 2 A$, then 
$$
Z_{n, A, B, p}(s)= \dfrac{1-p^{-2s}}{1-p^{-s}}\cdot\dfrac1{1-p^{-s}\left(\frac{-nB}p\right)}.
$$

\

If $p \mid A$, $p \nmid n$, $p$ odd, then 
$$
Z_{n, A, B, p}(s)= \left(1+\left(\dfrac{-nB}p\right)\right)\dfrac1{1-p^{-s}}.
$$

\

If $p\mid A$, $p\| n$, $p$ odd, then if we write $n f_{p^{k+ \text{ord}_p(A)}}(B) =p n_0$
\begin{align}
    Z_{n, A, B, p}(s) & = \sum_{k =0}^\infty   \frac{C (p^{k+ \text{ord}_p(A)}, -pn_0)}{p^{ks}} \\
    &= \begin{cases}
1 & p \| A \\ 
0 & p^2 | A.
    \end{cases} \label{zero}
\end{align}

\

Finally, let $p=2$. We note that this means $B$ is odd and for each $k \geq 1$, $f_{2^{k+ \text{ord}_2(A)}}(B)$ is also odd. We recognize a few cases based on $\text{ord}_2(A)$. 

\

If $\text{ord}_2(A) =0$, then we have the following possibilities: 

\

\begin{itemize}

\item If $n \equiv -B \mod 8$, then 
$$
Z_{n, A, B, 2}(s) =\dfrac{1-2^{-2s}}{1-2^{-s}}\cdot\dfrac{2\cdot2^{-2s}-2^{-s}+1}{1-2^{-s}}. 
$$

\item If $n \equiv -B + 4 \mod 8$, then 
$$
Z_{n, A, B, 2}(s) =1+\dfrac1{2^s}+\dfrac2{2^{2s}}.
$$

\item If $n \equiv -B + 2$ or $n \equiv -B + 6 \mod 8$, then 
$$
Z_{n, A, B, 2}(s) =1+\dfrac1{2^s}.
$$
\end{itemize}

If $\text{ord}_2(A) =1$, then we have the following cases: 

\

\begin{itemize}
    \item If $n\equiv -B \mod 8$, then 
    $$
    Z_{n, A, B, 2}(s) = 1 + \dfrac{1}{2^s} + \dfrac{4 \cdot 2^{-2s}}{1-2^{-s}}.
    $$
    \item If $n \equiv -B + 4 \mod 8$, then 
    $$
    Z_{n, A, B, 2}(s) = 1 + \dfrac{2}{2^s}.
    $$
    \item If $n \equiv -B + 2$ or $n \equiv -B + 6 \mod 8$, then 
    $$
    Z_{n, A, B, 2}(s) = 1. 
    $$
\end{itemize}

\ 

If $\text{ord}_2(A) =2$, then we have the following cases:

\

\begin{itemize}
    \item If $n\equiv -B \mod 8$, then 
    $$
    Z_{n, A, B, 2}(s) = 2 + \dfrac{4 \cdot 2^{-s}}{1-2^{-s}}.
    $$
    \item If $n \equiv -B + 4 \mod 8$, then 
    $$
    Z_{n, A, B, 2}(s) = 2.
    $$
    \item If $n \equiv -B + 2$ or $n \equiv -B + 6 \mod 8$, then 
    $$
    Z_{n, A, B, 2}(s) = 0. 
    $$
\end{itemize}

\

If $\text{ord}_2(A) \geq 3$, then we have the following cases:

\

\begin{itemize}
    \item If $n\equiv -B \mod 8$, then 
    $$
    Z_{n, A, B, 2}(s) = \dfrac{4}{1-2^{-s}}.
    $$
    \item If $n \equiv -B + 2$, $-B + 4$, $-B + 6 \mod 8$, then 
    $$
    Z_{n, A, B, 2}(s) = 0.
    $$
\end{itemize}

The upshot of the above computation is the following proposition:
\begin{proposition}
    For any $n$ and $p \nmid B$ there is a rational function $\gamma_{n, A, B, p}(X)$ such that 
$$
Z_{n, A, B, p}(s)= \gamma_{n, A, B, p}(p^{-s}) \cdot \dfrac{1-p^{-2s}}{1-p^{-s}}\cdot\dfrac1{1-p^{-s}\left(\frac{-nB}p\right)}, 
$$
and if $p \nmid 2A$, then $\gamma_{n, A, B, p}(X)=1$. Furthermore, if $n_1 \equiv n_2 \mod 8A$, then for all $p$, 
$$
\gamma_{n_1, A, B, p}(X)=\gamma_{n_2, A, B, p}(X). 
$$
\end{proposition}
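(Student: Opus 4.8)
The plan is to obtain the proposition as a uniform repackaging of the explicit local computations just completed, so that the proof is essentially bookkeeping across the finite list of cases. Writing $X=p^{-s}$ and abbreviating the \emph{standard factor} by
$$
S_{n,p}(X)=\frac{1-X^2}{1-X}\cdot\frac{1}{1-X\left(\frac{-nB}{p}\right)},
$$
I would simply \emph{define} $\gamma_{n,A,B,p}(X)=Z_{n,A,B,p}(s)\,S_{n,p}(X)^{-1}$ with $X=p^{-s}$. The numerator $1-X^2$ of $S_{n,p}$ is a nonzero polynomial, so $S_{n,p}$ is invertible in the field $\mathbb{C}(X)$ of rational functions; and every $Z_{n,A,B,p}(s)$ computed above is manifestly a rational function of $p^{-s}$. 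Hence the quotient is a well-defined element of $\mathbb{C}(X)$, which gives both the asserted factorization and the rationality of $\gamma_{n,A,B,p}$ at once.

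The case $p\nmid 2A$ is then immediate: the computation above gives $Z_{n,A,B,p}(s)=S_{n,p}(p^{-s})$ exactly. This includes the sub-case $p\mid n$, where $\left(\frac{-nB}{p}\right)=0$ and both sides reduce to $1+p^{-s}$, so in all of this range $\gamma_{n,A,B,p}(X)=1$.

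For the congruence-invariance claim I would track, case by case, the exact residue of $n$ on which each formula depends. For odd $p\mid A$, the value $Z_{n,A,B,p}(s)$ is selected by whether $p\mid n$ and, when $p\nmid n$, by the symbol $\left(\frac{-nB}{p}\right)$; both are functions of $n\bmod p$ only (using that $n$ is square-free, so $p\mid n\iff p\,\|\,n$). For $p=2$ the relevant data are the case $n\equiv -B,\,-B+2,\,-B+4,\,-B+6\bmod 8$ and the symbol $\left(\frac{-nB}{2}\right)$, both functions of $n\bmod 8$ only. In every case $S_{n,p}$ depends on $n$ through the same residue as $Z_{n,A,B,p}$, so $\gamma_{n,A,B,p}$ does too. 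Since $8\mid 8A$ and every odd $p\mid A$ divides $8A$, a congruence $n_1\equiv n_2\bmod 8A$ forces $n_1\equiv n_2\bmod 8$ and $n_1\equiv n_2\bmod p$ for each such $p$; together with $\gamma\equiv 1$ when $p\nmid 2A$, this yields $\gamma_{n_1,A,B,p}=\gamma_{n_2,A,B,p}$ for all $p$.

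I expect no serious obstacle, since the content is the careful matching of cases already tabulated. The one place demanding attention is $p=2$, where $\left(\frac{-nB}{2}\right)$ in $S_{n,p}$ must be interpreted as the Kronecker symbol (equal to $\pm1$ because $nB$ is odd) and shown to depend only on $-nB\bmod 8$; checking that this interpretation simultaneously makes $\gamma_{n,A,B,2}$ rational and $8A$-periodic in $n$ is the crux of the argument.
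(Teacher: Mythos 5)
Your proposal is correct and is essentially the paper's own argument: the paper offers no separate proof, presenting the proposition as the ``upshot'' of the preceding case-by-case local computations, and your definition of $\gamma_{n,A,B,p}$ as the quotient $Z_{n,A,B,p}\cdot S_{n,p}^{-1}$ together with the residue-tracking (mod $p$ for odd $p\mid A$, mod $8$ for $p=2$, trivially for $p\nmid 2A$) is exactly the bookkeeping that justifies it. Your explicit handling of the subcase $p\nmid 2A$, $p\mid n$ (where the symbol vanishes and both sides equal $1+p^{-s}$) and of the Kronecker symbol at $p=2$ makes precise details the paper leaves implicit.
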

The above proposition should be compared with Proposition 2.4 of \cite{LTB}.  It should also be noted that if $B$ is odd, then the period $8A$ can be replaced by $A$. 

\

Going back to the two variable zeta function we get 
$$
\zeta_{A, B}(s_1, s_2) = \frac{\zeta(s_1)}{\zeta(2s_1)}\sum_{n \text{ odd, square-free}} a_{n, A, B}(s_1) \frac{L_{2AB}\left(\left(\frac{-nB}{\cdot} \right), s_1\right)}{n^{s_2}}, 
$$
where 
$$
a_{n, A, B}(s) = \prod_{p \mid 2A} \gamma_{n, A, B, p}(p^{-s}), 
$$
and $L_{2AB}(\left(\frac{-nB}{\cdot} \right), s_1)$ is the Dirichlet $L$-function of the character $\psi(k)= \left(\frac{-nB}{k} \right)$ with Euler factors corresponding to the primes $p$ dividing $2A$ removed.

In order to prove the analytic continuation of the above zeta function we set  
$$
\tilde \zeta_{A, B}(s_1, s_2)= \frac{\zeta(s_1)}{\zeta(2s_1)}\sum_{\substack{n \text{ square-free} \\ \gcd(n, 2A)=1}} a_{n, A, B}(s_1) \frac{L_{2AB}\left(\left(\frac{-nB}{\cdot} \right), s_1\right)}{n^{s_2}}. 
$$
Set 
$$
\delta_j(n) = \begin{cases} 1 & n \equiv j \mod 8 A \\ 
0 & n \not\equiv j \mod 8 A. 
\end{cases}
$$
By the orthogonality of characters, if $\gcd(j, 8A)=1$, 
$$
\delta_j(n) = \frac{1}{\phi(8A)} \sum_{\chi} \chi(j)^{-1} \chi(n), 
$$
where the sum is over all Dirichlet characters modulo $8A$.  We then have 
\begin{align*}
    \tilde \zeta_{A, B}(s_1, s_2) & = \frac{\zeta(s_1)}{\zeta(2s_1)} \sum_{\substack{n \text{ square-free} \\ \gcd(n, 2A)=1}} a_{n, A, B}(s_1) \frac{L_{2AB}\left(\left(\frac{-nB}{\cdot} \right), s_1\right)}{n^{s_2}} \\ 
    & = \frac{\zeta(s_1)}{\zeta(2s_1)} \sum_{\substack{n \text{ square-free}}} \sum_{j \in (\mathbb Z / 8 A \mathbb Z)^\times} \delta_j(n)a_{j, A, B}(s_1) \frac{L_{2AB}\left(\left(\frac{-nB}{\cdot} \right), s_1\right)}{n^{s_2}}\\
    & = \frac{\zeta(s_1)}{\zeta(2s_1)} \sum_{\substack{n \text{ square-free}}} \sum_{j \in (\mathbb Z / 8 A \mathbb Z)^\times} \frac{1}{\phi(8A)} \sum_{\chi} \chi(j)^{-1} \chi(n)a_{j, A, B}(s_1) \frac{L_{2AB}\left(\left(\frac{-nB}{\cdot} \right), s_1\right)}{n^{s_2}}\\
    & = \frac{\zeta(s_1)}{\zeta(2s_1)} \frac{1}{\phi(8A)} \sum_{\chi}\sum_{j \in (\mathbb Z / 8 A \mathbb Z)^\times} \chi(j)^{-1}a_{j, A, B}(s_1)\sum_{\substack{n \text{ square-free}}}\chi(n)\frac{L_{2AB}\left(\left(\frac{-nB}{\cdot} \right), s_1\right)}{n^{s_2}}. 
\end{align*}
The analytic continuation of the innermost zeta function 
$$
\sum_{\substack{n \text{ square-free}}}\chi(n)\frac{L_{2AB}\left(\left(\frac{-nB}{\cdot} \right), s_1\right)}{n^{s_2}}
$$
to a meromorphic function on all of $\mathbb C^2$ is the main result of \S 2.2-2.3 of \cite{LTB}. Consequently we get the following proposition: 
\begin{proposition}\label{coprime}
    The zeta function $\tilde \zeta_{A, B}(s_1, s_2)$ has an analytic continuation to a meromorphic function on all of $\mathbb C^2$. 
\end{proposition}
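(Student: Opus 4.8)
The plan is to leverage the decomposition derived immediately above the statement, which already expresses $\tilde\zeta_{A,B}(s_1,s_2)$ as a finite combination of the innermost series and a small number of explicit meromorphic factors. Concretely, I would record that
$$
\tilde\zeta_{A,B}(s_1,s_2)=\frac{\zeta(s_1)}{\zeta(2s_1)}\,\frac{1}{\phi(8A)}\sum_{\chi}\Bigl(\sum_{j\in(\mathbb Z/8A\mathbb Z)^\times}\chi(j)^{-1}a_{j,A,B}(s_1)\Bigr)\,D_\chi(s_1,s_2),
$$
where I abbreviate the innermost sum, which does not depend on $j$, by
$$
D_\chi(s_1,s_2)=\sum_{n\text{ square-free}}\chi(n)\,\frac{L_{2AB}\!\left(\left(\tfrac{-nB}{\cdot}\right),s_1\right)}{n^{s_2}}.
$$
Both outer sums, over Dirichlet characters $\chi\bmod 8A$ and over residues $j\in(\mathbb Z/8A\mathbb Z)^\times$, are finite, so it suffices to treat each summand.

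Next I would dispose of the prefactors. The quotient $\zeta(s_1)/\zeta(2s_1)$ is meromorphic in $s_1$ on all of $\mathbb C$, hence meromorphic on $\mathbb C^2$; the constant $1/\phi(8A)$ and the roots of unity $\chi(j)^{-1}$ are harmless; and by the Proposition above, each $a_{j,A,B}(s_1)=\prod_{p\mid 2A}\gamma_{j,A,B,p}(p^{-s_1})$ is a finite product of rational functions in the variables $p^{-s_1}$, therefore meromorphic in $s_1$. Since finite sums and products of meromorphic functions on $\mathbb C^2$ are again meromorphic, the whole statement reduces to the meromorphic continuation of each $D_\chi(s_1,s_2)$.

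Finally, for the continuation of $D_\chi$ I would invoke the analysis of \S 2.2--2.3 of \cite{LTB} directly. Writing the truncated $L$-function as a Dirichlet series in $s_1$ and interchanging the order of summation, $D_\chi$ becomes a double Dirichlet series of shape $\sum_{n,k}\chi(n)\left(\tfrac{-nB}{k}\right)n^{-s_2}k^{-s_1}$, whose two variables are coupled through the quadratic symbol; this is exactly the object whose continuation to $\mathbb C^2$ is established in loc.\ cit. The only point requiring care, and the step I expect to be the main obstacle, is verifying that the features present here, namely the extra twist by $\chi$, the fixed sign $-B$ inside the symbol, and the removal of the finitely many Euler factors at the primes dividing $2AB$, do not affect the argument: the missing Euler factors contribute only a finite product of rational functions in the relevant $p^{-s_1}$, while $\chi$ and the sign $-B$ merely permute and rescale the residue classes on which the reciprocity computation of \cite{LTB} is carried out. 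Granting that the argument of \cite{LTB} applies after these cosmetic modifications, each $D_\chi$ continues meromorphically to $\mathbb C^2$, and reassembling the finite sum completes the proof.
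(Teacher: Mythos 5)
Your proposal is correct and follows essentially the same route as the paper: the paper's own proof consists precisely of the character-orthogonality decomposition you quote, the observation that the finitely many factors $\zeta(s_1)/\zeta(2s_1)$, $\chi(j)^{-1}$, and $a_{j,A,B}(s_1)$ (finite products of rational functions in $p^{-s_1}$) are meromorphic, and a direct appeal to \S 2.2--2.3 of \cite{LTB} for the continuation of the innermost series $D_\chi(s_1,s_2)$. Your additional caution about the twist by $\chi$, the sign $-B$, and the removed Euler factors is reasonable, but the paper treats the innermost series as exactly the object handled in \cite{LTB}, so no further verification is needed beyond what you describe.
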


Next we treat the zeta function $\zeta_{A, B}(s_1, s_2)$. Write $A=A_1A_2$ with $A_1$ square-free, $A_2$ squarefull, i.e., for all $p$, $p\mid A_2$ implies $p^2 \mid A$, and $\gcd(A_1, A_2)=1$. By Equation \eqref{zero} if $\gcd(n, A_2) \ne 1$, $Z_{n, A, B}=0$. Hence, 
$$
\zeta_{A, B}(s_1, s_2) = \frac{\zeta(s_1)}{\zeta(2s_1)}\sum_{\substack{n \text{ odd, square-free}\\ \gcd(n, A_2)=1}} a_{n, A, B}(s_1) \frac{L_{2AB}\left(\left(\frac{-nB}{\cdot} \right), s_1\right)}{n^{s_2}}, 
$$
Since $A_1$ is square-free, we can write $A_1 = q_1 \cdots q_r$ with $q_i$'s distinct primes. For each nonempty subset $I$ of $\{q_1, \dots, q_r\}$, let  
$$
\zeta_{A, B}^I(s_1, s_2) =  \frac{\zeta(s_1)}{\zeta(2s_1)}\sum_{\substack{n \text{ odd, square-free}\\ \gcd(n, A_2)=1 \\ \text{for all }q \in I, q \mid n} } a_{n, A, B}(s_1) \frac{L_{2AB}\left(\left(\frac{-nB}{\cdot} \right), s_1\right)}{n^{s_2}}. 
$$
Then by inclusion-exclusion, 
$$
\zeta_{A, B}(s_1, s_2) = \tilde\zeta_{A, B}(s_1, s_2)  + \sum_{I \ne \varnothing} (-1)^{\# I + 1}\zeta_{A, B}^I(s_1, s_2).
$$
So, it suffices to prove the analytic continuation of each $\zeta_{A, B}^I(s_1, s_2)$. Let $t(I) = \prod_{q \in I} q$. Then since $n$ is square-free, writing $n = n' t(I)$, 
\begin{align*}
\zeta_{A, B}^I(s_1, s_2) & =  \frac{\zeta(s_1)}{\zeta(2s_1)}\sum_{\substack{n \text{ odd, square-free}\\ \gcd(n, A_2)=1 \\ \gcd(n', t(I))=1} } a_{n' t(I), A, B}(s_1) \frac{L_{2AB}\left(\left(\frac{-n' t(I) B}{\cdot} \right), s_1\right)}{n^{s_2}} \\ 
& = \frac{\zeta(s_1)}{\zeta(2s_1)}\sum_{\substack{n \text{ odd, square-free}\\ \gcd(n, A_2)=1 \\ \gcd(n', t(I))=1} } a_{n' t(I), A, B}(s_1) \frac{L_{2AB}\left(\left(\frac{-n' t(I) B}{\cdot} \right), s_1\right)}{n^{s_2}}
\end{align*}
The expression $a_{n' t(I), A, B}(s_1)$ is determined with $n'$ modulo $8A/ t(I)$ the square-free part of which has fewer prime factors than the square-free part of $8 A$. By repeating this process we may assume $A=1$, and that, after a change of notation, we have a summation of the form 
\begin{equation}\label{partial}
\frac{\zeta(s_1)}{\zeta(2s_1)} \sum_{n \text { odd, square-free}} A_n(s_1) \frac{L_{2AB}\left(\left(\frac{-n' t(I) B}{\cdot} \right), s_1\right)}{n^{s_2}}
\end{equation}
with meromorphic functions $A_n(s)$ such that if $n_1 \equiv n_2 \mod 8$ then $A_{n_1}(s) = A_{n_2}(s)$. Now an argument similiar to the proof of the analytic continuation of $\tilde\zeta_{A, B}$, with $8$ replacing $8A$, gives the analytic continuation of the zeta function in Equation \eqref{partial}. This gives us the analytic continuation of $\zeta_{A, B}^I(s_1, s_2)$ to the entire $\mathbb C^2$ which implies the analytic continuation of $\zeta_{A, B}(s_1, s_2)$.  Putting everything together we obtain the following theorem: 

\begin{theorem}\label{thm:main}
    The zeta function $Z_{A, B}(s_1, s_2)$ in Equation \eqref{zetaAB} has an analytic continuation to the entire $\mathbb C^2$ as a meromorphic function.  
\end{theorem}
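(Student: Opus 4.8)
The plan is to assemble the final statement from the structural results already established, reducing in stages from the general $Z_{A,B}$ to cases whose continuation is known. First I would record the Dirichlet series expansion $Z_{A,B}(s_1,s_2) = \sum_{m,n} C_{A,B}(m,n)\, m^{-s_1} n^{-s_2}$ and use the factorization over $\delta \mid B$ together with the identity $C_{A,B}(m,n) = \delta\, C_{A, B/\delta}(m/\delta, n/\delta)$ to write $Z_{A,B}$ as a finite sum of shifted copies $\delta^{-(s_1+s_2-1)}\, \zeta_{A, B/\delta}(s_1, s_2)$. Since a finite $\mathbb{C}$-linear combination of meromorphic functions is meromorphic, it suffices to continue each $\zeta_{A,B}$ with $A, B$ coprime.

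Next I would exploit the Euler product $Z_{n,A,B}(s) = \prod_{p \nmid B} Z_{n,A,B,p}(s)$ and the local computations to extract, via the Proposition above, the factorization $Z_{n,A,B,p}(s) = \gamma_{n,A,B,p}(p^{-s})\, \frac{1-p^{-2s}}{1-p^{-s}}\, \frac{1}{1 - p^{-s}(-nB/p)}$ with $\gamma_{n,A,B,p} = 1$ for $p \nmid 2A$. Multiplying the local factors and separating the generic part produces the global identity $\zeta_{A,B}(s_1,s_2) = \frac{\zeta(s_1)}{\zeta(2s_1)} \sum_{n} a_{n,A,B}(s_1)\, L_{2AB}((-nB/\cdot), s_1)/n^{s_2}$, where the $a_{n,A,B}(s_1)$ depend on $n$ only through its class modulo $8A$. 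At this point the analysis splits into the part of the sum with $\gcd(n, 2A)=1$, namely $\tilde\zeta_{A,B}$, and the remainder.

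For $\tilde\zeta_{A,B}$ I would use orthogonality of Dirichlet characters modulo $8A$ to replace the congruence-periodic coefficient $a_{n,A,B}(s_1)$ by the finite sum $\frac{1}{\phi(8A)}\sum_\chi \chi(j)^{-1} a_{j,A,B}(s_1)\, \chi(n)$, which uncouples the $n$-dependence of the coefficient from the $L$-function and reduces $\tilde\zeta_{A,B}$ to a finite combination of the inner double series $\sum_{n \text{ square-free}} \chi(n)\, L_{2AB}((-nB/\cdot), s_1)/n^{s_2}$. The continuation of this inner series to all of $\mathbb{C}^2$ is precisely the main result of \cite{LTB}, so Proposition \ref{coprime} follows. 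To recover the full $\zeta_{A,B}$ I would peel off the primes dividing $A$: writing $A = A_1 A_2$ with $A_1$ square-free and $A_2$ squarefull, the vanishing in \eqref{zero} kills all $n$ with $\gcd(n,A_2)\neq 1$, and inclusion–exclusion over the nonempty subsets $I$ of the prime divisors of $A_1$ expresses $\zeta_{A,B}$ as $\tilde\zeta_{A,B}$ plus the alternating sum of the $\zeta_{A,B}^I$. Each $\zeta_{A,B}^I$, after substituting $n = n'\, t(I)$, has the same shape as the original sum but with $8A$ replaced by $8A/t(I)$, whose square-free part has strictly fewer prime factors; iterating reaches $A=1$, where the modulus-$8$ orthogonality argument again applies. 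Since every building block is meromorphic on $\mathbb{C}^2$ and only finitely many are combined, $Z_{A,B}$ continues meromorphically to all of $\mathbb{C}^2$.

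The main obstacle is the continuation of the inner double Dirichlet series $\sum_{n} \chi(n)\, L_{2AB}((-nB/\cdot), s_1)/n^{s_2}$: because the $L$-function in the numerator itself varies with the summation variable $n$ through the quadratic character $(-nB/\cdot)$, this is a genuine double Dirichlet series rather than a product of one-variable series, and its meromorphic continuation past the region of absolute convergence relies on the quadratic reciprocity relating the $n$-summation to the inner character sum defining the $L$-function — the technical heart carried out in \cite{LTB}. Everything else here is bookkeeping: finite decompositions, Euler products, and character orthogonality, each of which manifestly preserves meromorphy.
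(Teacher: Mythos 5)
Your proposal is correct and follows essentially the same route as the paper: reduction to the coprime case via the $\delta \mid B$ decomposition, the Euler-product factorization isolating $\frac{\zeta(s_1)}{\zeta(2s_1)}$ and the $L$-functions $L_{2AB}\left(\left(\frac{-nB}{\cdot}\right), s_1\right)$, character orthogonality modulo $8A$ to continue $\tilde\zeta_{A,B}$ by citing the double Dirichlet series result of \cite{LTB}, and then inclusion--exclusion over the prime divisors of the square-free part of $A$ with the iterative descent on the modulus to recover the full $\zeta_{A,B}$. You also correctly identify that the analytic heart of the matter is the continuation of the inner series $\sum_{n}\chi(n)L_{2AB}\left(\left(\frac{-nB}{\cdot}\right), s_1\right)n^{-s_2}$, which both you and the paper delegate to \cite{LTB}.
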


 \bibliographystyle{plain}
 \bibliography{paper}

\end{document}